\documentclass[12pt]{article}
\usepackage[centertags]{amsmath}
\usepackage{amsfonts}
\usepackage{amssymb}
\usepackage{amsthm}
\usepackage{xcolor}
\input{amssym.def}
\usepackage{graphicx}
\usepackage{lineno}
\parindent 2pc

\textwidth=7.1 in \textheight=9.5 in \hoffset=-.7 in \voffset=-1 in

\numberwithin{equation}{section}

\newtheorem{Definition}{Definition}[section]
\newtheorem{theorem}[Definition]{Theorem}
\newtheorem{lemma}[Definition]{Lemma}
\newtheorem{proposition}[Definition]{Proposition}

\begin{document}

\title{\Large \bf On $m$-bi-interior ideals of an ordered semigroup}
\author{Susmita Mallick   \\
\footnotesize{Department of Mathematics, Visva-Bharati
University,}\\
\footnotesize{Santiniketan, Bolpur - 731235, West Bengal, India}\\
\footnotesize{mallick.susmita11@gmail.com}}

\date{}
\maketitle

\begin{abstract}
In this article, we concurrently explore the class of $m$-bi-ideals
as well as the class of $m$-interior ideals in an ordered semigroup.
We call these class of ideals as $m$-bi-interior ideals.  Here,
$m$-simple, $m$-regular ordered semigroups and their various
subclasses get characterized in diverse manners as per their
$m$-bi-interior ideals.
\end{abstract}
{\it Keywords and Phrases:} $m$-interior ideal, $m$-bi-ideal,
$m$-bi-interior ideal, $m$-bi-interior-simple, $m$-regular.
\\{\it 2010 Mathematics Subject Classification:}  20M10;06F05.

\section{Introduction and preliminaries}

A semigroup $(S,\cdot)$ with a partial order that is compatible with
the semigroup operation such that for all $a,b\ \textrm{and} \;x\in
S$, $a\leq b$ implies $xa\leq xb$ and $ax\leq bx$, is called an
ordered semigroup and denoted by $(S,\cdot, \leq)$. For an ordered
semigroup $S$ and $H\subseteq S$, denote $(H]:= \{t \in H: t\leq h,
\textrm{for some} \;h\in H\}$. The concept of bi-ideals of
semigroups were first presented by R. A. Good and D. R.
Hughes\cite{R.A. Good} while S. Lajos\cite{lajos} and G. Szasz
{\cite{szasz 77},\cite{szasz 81}} have since developed the idea of
interior ideals in semigroup theory. The bi-interior ideal in
semigroups was a term introduced by M. M. K. Rao\cite{M} to
generalize these two types of ideals. M. Munir\cite{Munir2}
established the idea of $m$-left and $m$-right ideals in semirings,
later this concepts are proposed for the semigroups by A. Shafiq and
M. Munir\cite{Munir3}. In \cite{SM}, we have studied $m$-ideals and
$m$-regularity in an ordered semigroup.

In this paper we introduce the notion of $m$-bi-interior ideals in
an ordered semigroup. Our approach allows one to see a detailed
exposition of $m$-bi-interior ideals in $m$-simple, $m$-regular
ordered semigroup and its different subclasses. we have explored
some inter-relations between $m$-bi-interior ideal and other
$m$-ideals.

\begin{Definition}\cite{M}
A non-empty subset $B$ of a semigroup $S$ is said to be bi-interior
ideal of $S$ if $BSB\cap SBS\subseteq B$.
\end{Definition}

\begin{Definition}\cite{SM} A subsemigroup $L$ of $S$ is called $m$-left($m$-right) ideal of $S$
if $S^{m}L\subseteq L$ ($RS^{m}\subseteq R$) and $(L]=L$($(R]=R$),
for any positive integer $m$ not necessarily $1$.
\end{Definition}
A subsemigroup $I$ of $S$ is called an $m$-two sided ideal or simply
an $m$-ideal of $S$ if it is both $m$-left ideal and $m$-right ideal
of $S$.

\begin{Definition}\cite{SM}
Let $m$ be non-negative integer. An ordered semigroup $S$ is said to
be $m$-left-simple($m$-right-simple) if it does not contain any
proper non trivial $m$-left($m$-right) ideal. An ordered semigroup
$S$ is called $m$-simple if it is both $m$-left and $m$-right
simple.
\end{Definition}

\begin{Definition}\cite{SM}
Let $S$ be an ordered semigroup and $B$ be subsemigroup of $S$, then
$B$ is called $m$-bi-ideal of $S$ if $BS^{m}B\subseteq B$ and
$(B]=B$, where $m\geq 1$ is a positive integer. $m$ is called
bipotency of the bi-ideal $B$.
\end{Definition}

\begin{Definition}
A subsemigroup $I$ of an ordered semigroup $S$ is called
$m$-interior ideal of $S$ if $S^{m}IS^{m}\subseteq I$ and $(I]=I$,
where $m\geq 1$ is a positive integer.
\end{Definition}

\begin{Definition}\cite{SM}
A subsemigroup $Q$ of an ordered semigroup $S$ is called
$m$-quasi-ideal of $S$ if $(S^{m}Q]\cap (QS^{m}]\subseteq Q$ and
$(Q]=Q$, where $m\geq 1$, called quasipotency of quasi-ideal $Q$.
\end{Definition}

\begin{Definition}\cite{SM}
An element $a$ of an ordered semigroup $S$ is called $m$-regular if
$a\leq axa$, for some $x\in S^{m}$. An ordered semigroup $S$ is
$m$-regular if every element of $S$ is $m$-regular that is
$m$-regular if $a\in (aS^{m}a]$, for all $a\in S$.
\end{Definition}

\begin{lemma}\cite{kehayopulu2} \label{AB}
Let $S$ be an ordered semigroup and $A$ and $B$ be subsets of $S$.
Then the following statements hold:

\begin{enumerate}
\item \vspace{-.4cm} $A\subseteq (A]$.
\item \vspace{-.4cm} $((A]]=(A]$.
\item \vspace{-.4cm} If $A\subseteq B$ then $(A]\subseteq (B]$.
\item \vspace{-.4cm} $(A\cap B]\subseteq (A]\cap (B]$.
\item \vspace{-.4cm} $(A\cup B]=(A]\cup (B]$.
\item \vspace{-.4cm} $(A](B]\subseteq (AB]$.
\item \vspace{-.4cm} $((A](B]]=(AB]$.
 \end{enumerate}
\end{lemma}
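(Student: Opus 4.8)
The plan is to verify the seven assertions essentially in the order listed, since the monotonicity statement in part~(3) and the idempotency in part~(2) serve as the workhorses for the remaining items. Throughout I would rely only on the three structural facts available: reflexivity of $\leq$, transitivity of $\leq$, and compatibility of $\leq$ with the multiplication. The first three parts are pure poset facts, while the multiplicative axiom of an ordered semigroup enters only at the very end.

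First, for~(1) I would take any $a\in A$ and note $a\leq a$, so $a\in(A]$ directly from the definition. For~(2), the inclusion $(A]\subseteq((A]]$ is an instance of~(1) applied to the set $(A]$; the reverse follows by picking $t\in((A]]$, so that $t\leq s$ with $s\in(A]$ and hence $s\leq a$ for some $a\in A$, and then invoking transitivity to conclude $t\leq a$, i.e.\ $t\in(A]$. For~(3), if $t\in(A]$ then $t\leq a$ for some $a\in A\subseteq B$, so $t\in(B]$.

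With monotonicity in hand, parts~(4) and~(5) are immediate. For~(4), applying~(3) to $A\cap B\subseteq A$ and to $A\cap B\subseteq B$ gives both inclusions $(A\cap B]\subseteq(A]$ and $(A\cap B]\subseteq(B]$, hence containment in the intersection. For~(5), one inclusion comes from~(3) applied to $A,B\subseteq A\cup B$, while the reverse is a short case split: any $t\leq c$ with $c\in A\cup B$ lands in $(A]$ or in $(B]$ according to which set contains $c$.

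The only step that genuinely uses the ordered-semigroup axiom, as opposed to the poset structure alone, is~(6), which I expect to be the crux. For $x\in(A](B]$ I would write $x=st$ with $s\leq a\in A$ and $t\leq b\in B$; compatibility then gives $st\leq at$ and $at\leq ab$, so by transitivity $x=st\leq ab\in AB$ and thus $x\in(AB]$. Finally~(7) combines the earlier parts: applying~(3) and then~(2) to~(6) yields $((A](B]]\subseteq((AB]]=(AB]$, and since $AB\subseteq(A](B]$ by~(1), part~(3) gives the reverse inclusion $(AB]\subseteq((A](B]]$, completing the chain of equalities.
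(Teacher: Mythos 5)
Your proof is correct. The paper states this lemma as a quoted result from Kehayopulu--Tsingelis and gives no proof of its own, so there is nothing to compare against; your argument is the standard one, correctly isolating the compatibility axiom as the only non-poset ingredient (needed in part~(6)), with everything else following from reflexivity, transitivity, and the monotonicity in part~(3). One small remark: the paper's displayed definition $(H]=\{t\in H: t\leq h \text{ for some } h\in H\}$ contains a typo (it should read $t\in S$, not $t\in H$), and you have rightly worked with the intended standard definition.
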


\begin{lemma}\cite{SM}\label{REGULAR}
Let $S$ be an ordered semigroup. $S$ is $m$-regular if and only if
for every $m$-right ideal $R$ and $m$-left ideal $L$ of $S$,
$(RL]=R\cap L$.
\end{lemma}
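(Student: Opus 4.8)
The plan is to prove the two implications separately, relying throughout on the elementary fact that $S^{k}\subseteq S^{m}$ whenever $k\ge m$ (a product of $k$ factors is also a product of $m$ factors after grouping), together with the closure properties of the operator $(\,\cdot\,]$ collected in Lemma~\ref{AB}.

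For the forward direction, assume $S$ is $m$-regular and fix an $m$-right ideal $R$ and an $m$-left ideal $L$. First I would establish $RL\subseteq R\cap L$. Given $r\in R$ and $l\in L$, $m$-regularity supplies $x\in S^{m}$ with $r\le rxr$; multiplying on the right by $l$ yields $rl\le r(xrl)$ with $xrl\in S^{m+2}\subseteq S^{m}$, so $r(xrl)\in RS^{m}\subseteq R$ and hence $rl\in(R]=R$. Reading the same inequality as $rl\le(rxr)l$ with $rxr\in S^{m+2}\subseteq S^{m}$ gives $rl\in S^{m}L\subseteq L$, so $rl\in R\cap L$. Applying $(\,\cdot\,]$ and Lemma~\ref{AB} then yields $(RL]\subseteq(R\cap L]=R\cap L$, since $R\cap L$ is already downward closed. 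For the reverse inclusion take $a\in R\cap L$; choosing $x\in S^{m}$ with $a\le axa=(ax)a$ and noting $ax\in RS^{m}\subseteq R$ while $a\in L$ shows $(ax)a\in RL$, so $a\in(RL]$. Combining the two inclusions gives $(RL]=R\cap L$.

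For the converse, assume $(RL]=R\cap L$ for all such $R,L$ and fix $a\in S$; the goal is $a\in(aS^{m}a]$. The key construction is the principal $m$-right and $m$-left ideals of $a$, for which I would take $R=(\{a,a^{2},\dots,a^{m}\}\cup aS^{m}]$ and $L=(\{a,a^{2},\dots,a^{m}\}\cup S^{m}a]$; verifying that these are genuine subsemigroups satisfying $RS^{m}\subseteq R$ and $S^{m}L\subseteq L$ is the most delicate bookkeeping, and it rests precisely on rewriting high powers as $a^{k}=a\cdot a^{k-1}$ with $a^{k-1}\in S^{m}$ once $k-1\ge m$. By hypothesis $a\in R\cap L=(RL]$, so $a\le w$ for some $w\in RL$. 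Expanding the product with Lemma~\ref{AB}(6), every genuine cross term lies in $aS^{m}a$, while the only remaining terms are the powers $a^{k}$ with $2\le k\le m+1$.

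The step I expect to be the main obstacle is disposing of these leftover power terms, since $a\le a^{k}$ does not immediately give membership in $(aS^{m}a]$. I would resolve it by bootstrapping: from $a\le a^{k}$ with $k\ge2$ one derives $a^{k}\le a^{2k-1}$ and hence, iterating, $a\le a^{N}$ for arbitrarily large $N$; choosing $N\ge m+2$ lets me write $a^{N}=a\cdot a^{N-2}\cdot a$ with $a^{N-2}\in S^{N-2}\subseteq S^{m}$, so $a\le a^{N}\in aS^{m}a$. In either case $a\in(aS^{m}a]$, and since $a$ was arbitrary, $S$ is $m$-regular.
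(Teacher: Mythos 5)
This lemma is imported into the paper from \cite{SM} and no proof of it appears in the present manuscript, so there is nothing internal to compare your argument against; judged on its own, your proof is correct and complete. The forward direction is the standard computation: $RL\subseteq R\cap L$ via $rl\le r(xrl)$ and $rl\le(rxr)l$ with $S^{m+2}\subseteq S^{m}$, and $R\cap L\subseteq(RL]$ via $a\le(ax)a$. For the converse you correctly identify the two genuine difficulties. First, the principal $m$-right ideal of $a$ cannot be taken as $(\{a\}\cup aS^{m}]$ because $a\cdot a=a^{2}$ need not lie in $aS^{m}$ when $m\ge 2$ (there is no reason for $a\in S^{m}$); padding the generating set with $a,a^{2},\dots,a^{m}$ and rewriting $a^{k}=a\cdot a^{k-1}\in aS^{m}$ for $k\ge m+1$ fixes the subsemigroup and $RS^{m}\subseteq R$ verifications, exactly as you describe. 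Second, expanding $a\in(RL]$ leaves the stray comparisons $a\le a^{k}$ with $2\le k\le m+1$, which do not directly land in $(aS^{m}a]$; your bootstrapping $a\le a^{k}\Rightarrow a^{k}\le a^{2k-1}\Rightarrow a\le a^{2k-1}$, iterated until the exponent reaches $m+2$ so that $a^{N}=a\cdot a^{N-2}\cdot a\in aS^{m}a$, is the right (and standard) way to dispose of them. This is presumably essentially the argument of \cite{SM}; the only cosmetic slip is that the cross terms a priori include powers $a^{k}$ up to $k=2m$, not $m+1$, but those with $k\ge m+2$ already lie in $aS^{m}a$, which is what you use anyway.
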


\begin{theorem}\cite{SM}
\label{RL} If $S$ be an $m$-regular ordered semigroup then a
non-empty subset $B$ of $S$ is a $m$-bi-ideal of $S$ if and only if
$B=(RL]$ for some $m$-left ideal $L$ and $m$-right ideal $R$ of $S$.
\end{theorem}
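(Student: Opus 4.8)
The plan is to establish the two implications separately, leaning on the $m$-regularity characterization of Lemma~\ref{REGULAR} for the ``if'' part and on the explicitly generated one-sided ideals of $B$ for the ``only if'' part. One elementary fact I will isolate at the outset and use repeatedly is that $S^{k}\subseteq S^{m}$ whenever $k\ge m$ (group the first $k-m+1$ factors into a single element of $S$); in particular $S^{2m}\subseteq S^{m}$. This is exactly what makes the $m$-indexed computations collapse in the same way the classical $m=1$ computations do.

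For the direction $(\Leftarrow)$, suppose $B=(RL]$ for an $m$-right ideal $R$ and an $m$-left ideal $L$. Since $S$ is $m$-regular, Lemma~\ref{REGULAR} gives $(RL]=R\cap L$, so $B=R\cap L$. Then $B$ is a subsemigroup (intersection of two subsemigroups) and $(B]=((RL]]=(RL]=B$ by Lemma~\ref{AB}. For the bi-ideal inclusion I would use $B\subseteq R$ and $B\subseteq L$ separately: $BS^{m}B\subseteq RS^{m}R=(RS^{m})R\subseteq RR\subseteq R$, and symmetrically $BS^{m}B\subseteq LS^{m}L=L(S^{m}L)\subseteq LL\subseteq L$, whence $BS^{m}B\subseteq R\cap L=B$. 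Thus $B$ is an $m$-bi-ideal.

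For the direction $(\Rightarrow)$, let $B$ be an $m$-bi-ideal and set $R=(B\cup BS^{m}]$ and $L=(B\cup S^{m}B]$, the natural candidates for the right and left ideals generated by $B$. First I verify $R$ is an $m$-right ideal: $(R]=R$ is immediate from Lemma~\ref{AB}, while closure under multiplication and $RS^{m}\subseteq R$ reduce, via Lemma~\ref{AB}(6) and (7), to the set inclusions $BS^{m}B\subseteq B$ (the bi-ideal hypothesis) and $BS^{2m}\subseteq BS^{m}$ (from $S^{2m}\subseteq S^{m}$); the argument for $L$ is symmetric. To identify $(RL]$ with $B$, I expand $(B\cup BS^{m})(B\cup S^{m}B)$ into the pieces $BB,\ BS^{m}B,\ BS^{m}B,\ BS^{2m}B$, each of which lands in $B$ (the last using $S^{2m}\subseteq S^{m}$ followed by $BS^{m}B\subseteq B$), so Lemma~\ref{AB} yields $(RL]\subseteq(B]=B$. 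Finally $m$-regularity supplies the reverse inclusion: for $b\in B$ choose $x\in S^{m}$ with $b\le bxb$; then $bx\in BS^{m}\subseteq R$ and $b\in B\subseteq L$, so $b\le(bx)b\in RL$ and hence $b\in(RL]$. Combining the inclusions gives $B=(RL]$.

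The step I expect to demand the most care is confirming that $R$ and $L$ as defined are \emph{genuinely} $m$-right and $m$-left ideals, i.e. the inclusions $RS^{m}\subseteq R$ and $S^{m}L\subseteq L$, since a naive expansion throws up the term $BS^{2m}$ that is not visibly inside $B\cup BS^{m}$. The resolution is precisely the observation $S^{2m}\subseteq S^{m}$, which I would state explicitly and invoke here; once that is in hand, the remainder is routine bookkeeping with the closure operator $(\,\cdot\,]$ from Lemma~\ref{AB}.
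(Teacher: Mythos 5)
Your argument is correct and complete in both directions. Note, however, that the paper does not actually prove Theorem~\ref{RL}: it is quoted from the reference \cite{SM}, so there is no in-paper proof to compare against. The closest thing is the paper's own Theorem~\ref{(Rl]}, which establishes the exact analogue for $m$-bi-interior ideals, and the comparison there is instructive. In the ``only if'' direction the paper chooses $R=(BS^{m}]$ and $L=(S^{m}B]$ and must then invoke $m$-regularity (via $B\subseteq(BS^{m}B]$ and a chain of inclusions) to recover $B\subseteq(RL]$; you instead take the generated one-sided ideals $R=(B\cup BS^{m}]$ and $L=(B\cup S^{m}B]$, which makes $B\subseteq R\cap L$ automatic and concentrates the work in the verifications $RS^{m}\subseteq R$, $S^{m}L\subseteq L$ and $(RL]\subseteq B$ --- all of which you correctly reduce, via Lemma~\ref{AB}(6)--(7), to $BS^{m}B\subseteq B$ and the observation $S^{2m}\subseteq S^{m}$, which you rightly single out as the load-bearing fact. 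Your ``if'' direction, routing through Lemma~\ref{REGULAR} to write $(RL]=R\cap L$ and then checking $BS^{m}B\subseteq R$ and $BS^{m}B\subseteq L$ separately, is cleaner than a direct computation with $(RL]S^{m}(RL]$ and is entirely sound (the non-emptiness hypothesis guarantees $R\cap L$ is a subsemigroup). Both approaches are valid; yours is self-contained and arguably more transparent about where $m$-regularity is and is not needed --- indeed your ``if'' direction shows that $(RL]$ being an $m$-bi-ideal uses regularity only through the identification $(RL]=R\cap L$.
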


\section{\textbf{ $m$-Bi-interior ideals of Ordered Semigroups }}

In this section, we introduce the notion of $m$-bi-interior ideal as
a generalization of $m$-bi-ideal and $m$-interior ideal of an
ordered semigroup and study the properties of $m$-bi-interior ideals
of ordered semigroups, $m$-simple ordered semigroups and $m$-regular
ordered semigroups.

\begin{Definition}
A non-empty subset $B$ of an ordered semigroup $S$ is said to be
$m$-bi-interior ideal of $S$ if $(BS^{m}B]\cap
(S^{m}BS^{m}]\subseteq B$ and $(B]=B$.
\end{Definition}

\begin{Definition}
An ordered semigroup $S$ is said to be $m$-bi-interior-simple
ordered semigroup if $S$ has no $m$-bi-interior ideals other than
$S$ itself.
\end{Definition}

In the following theorem, we mention some important properties.

\begin{proposition}\label{L}
Let $S$ be an ordered semigroup. Then the following statements hold:

\begin{enumerate}

\item \vspace{-.4cm}
Every $m$-left($m$-right) ideal is an $m$-bi-interior ideal of $S$.

\item \vspace{-.4cm} Every $m$-ideal is an $m$-bi-interior ideal.
\item \vspace{-.4cm} If $A$ and $B$ are $m$-bi-interior ideals of $S$ then $A\cap B$(provided the intersection is non-empty) is
an $m$-bi-iterior ideal of $S$.

\item \vspace{-.4cm} Intersection of an $m$-right ideal and an $m$-left ideal of $S$ is
an $m$-bi-interior ideal of $S$.
\item \vspace{-.4cm}
Every $m$-quasi-ideal is an $m$-bi-interior ideal of $S$.

\item \vspace{-.4cm}
Every $m$-bi-ideal of  $S$ is an
$m$-bi-interior ideal of $S$.

\item \vspace{-.4cm}
Every $m$-interior ideal of ordered semigroup $S$ is an
$m$-bi-interior ideal of $S$

\item \vspace{-.4cm}

If $B$ is an $m$-bi-interior ideal of $S$, then $(BS]$ and $(SB]$
are $m$-bi-interior ideals of $S$.

\item \vspace{-.4cm} If $T$ be an $m$-right ideal of $S$ and
$B$ be an $m$-bi-interior ideal of $S$ then $B\cap T$ is an
$m$-bi-interior ideal of $S$.

\end{enumerate}

\end{proposition}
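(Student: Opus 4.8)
The plan is to verify each of the nine assertions by reducing the defining inclusion $(BS^mB]\cap(S^mBS^m]\subseteq B$ to \emph{one} of its two factors, and to treat the order-closure condition $(B]=B$ separately using Lemma \ref{AB}. Throughout, the technical workhorse is the monotonicity of powers of $S$: since any product of $m+1$ elements can be regrouped as a product of $m$ elements (absorbing the last two factors into a single element of $S$), one gets $S^{m+1}\subseteq S^m$, and hence $S^{j}\subseteq S^{i}$ whenever $j\ge i$; in particular $S^m\subseteq S$. Combined with parts (1), (2), (3), (4), (6) of Lemma \ref{AB}, this pushes all the computations through.

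For the ``every $X$ is an $m$-bi-interior ideal'' items (1), (2), (6), (7) I would bound a single factor. For an $m$-left ideal $L$ I would write $LS^mL=L(S^mL)\subseteq LL\subseteq L$ (using $S^mL\subseteq L$ and closure of $L$), so that $(LS^mL]\subseteq(L]=L$ and the intersection lands inside $L$; the $m$-right case is symmetric via $RS^mR=(RS^m)R\subseteq RR\subseteq R$, and (2) is then immediate. Item (6) is the same bound, now with $BS^mB\subseteq B$ as hypothesis, and (7) is the mirror image using $S^mIS^m\subseteq I$ to control the second factor $(S^mIS^m]$. For the $m$-quasi-ideal case (5) I would show $(QS^mQ]\subseteq(QS^m]\cap(S^mQ]$ via $QS^mQ\subseteq QS^{m+1}\subseteq QS^m$ and $QS^mQ\subseteq S^{m+1}Q\subseteq S^mQ$, and then invoke the quasi-ideal inclusion $(S^mQ]\cap(QS^m]\subseteq Q$.

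For the closure/intersection items I would argue more structurally. In (3), with $C=A\cap B$, the equality $(C]=C$ follows from $C\subseteq(C]$ together with $(A\cap B]\subseteq(A]\cap(B]=A\cap B$ by Lemma \ref{AB}(4); the bi-interior inclusion then follows by monotonicity, since $(CS^mC]\cap(S^mCS^m]$ is contained both in $(AS^mA]\cap(S^mAS^m]\subseteq A$ and in the analogous set for $B$, hence in $A\cap B$. Item (4) is the special case $A=R$, $B=L$ combined with (1), and (9) is the special case $B\cap T$ with $T$ an $m$-right ideal, again combined with (1)---so both reduce to (1) and (3).

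The step I expect to be most delicate is (8). The cleanest route is to prove that, for \emph{any} non-empty subset $B$, the set $(BS]$ is already an $m$-right ideal and $(SB]$ an $m$-left ideal, after which (1) finishes the job. Concretely, $(BS]$ is a subsemigroup because $(BS](BS]\subseteq(BSBS]\subseteq(BS]$ (using Lemma \ref{AB}(6) and $SBS\subseteq S$), it is order-closed by Lemma \ref{AB}(2), and $(BS]S^m\subseteq(BS\,S^m]=(BS^{m+1}]\subseteq(BS]$ using $S^{m+1}\subseteq S$ together with Lemma \ref{AB}(6) and (3); the symmetric computation handles $(SB]$. The care required here---tracking the operation $(\,\cdot\,]$ through the products and repeatedly invoking the power inclusion $S^{m+1}\subseteq S$---is where I would expect to spend the most effort, though no genuinely hard idea is needed.
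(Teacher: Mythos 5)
Your proposal is correct and follows essentially the same strategy as the paper: for each item you bound one of the two factors $(BS^mB]$ or $(S^mBS^m]$ using the defining inclusion of the relevant ideal together with $S^{m+1}\subseteq S^m$ and Lemma \ref{AB}, and you handle $(B]=B$ via Lemma \ref{AB}(2),(4). The only (harmless) deviation is in items (8) and (9), where you reduce to items (1) and (3) by showing $(BS]$ is an $m$-right ideal and $(SB]$ an $m$-left ideal, whereas the paper verifies the bi-interior inclusion by direct computation; both computations are the same at bottom.
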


 \begin{proof} $(1)$: Suppose $L$ is an $m$-left ideal of $S$ implies $S^{m}L\subseteq L$ and
$(L]=L$. Now   $(LS^{m}L]\cap (S^{m}LS^{m}]\subseteq
(LSL^{m}]\subseteq(SS^{m}L]\subseteq (S^{m+1}L]\subseteq
(S^{m}L]\subseteq L$. Hence $L$ is an $m$-bi-interior ideal of $S$.

Similarly it can be proved that every $m$-right ideal is an
$m$-bi-interior ideal of $S$.

$(2)$: Since every $m$-ideal is both $m$-left and $m$-right ideal of
an ordered semigroup $S$ then from previous property, it follows
that every $m$-ideal of $S$ is an $m$-bi-interior ideal of $S$.

 $(3)$: Let $A$ and $B$ be $m_{1}$-bi-interior ideal and $m_{2}$-bi-interior ideal respectively of an
ordered semigroup $S$. Then $(AS^{m_{1}}A]\cap
(S^{m_{1}}AS^{m_{1}}]\subseteq A$ and $(A]=A$, $(BS^{m_{2}}B]\cap
(S^{m_{2}}BS^{m_{2}}]\subseteq B$ and $(B]=B$.Let
$m=max\{m_{1},m_{2}\}$ Now $(S^{m}(A\cap B)S^{m}]\subseteq
(S^{m}AS^{m}]\cap (S^{m}BS^{m}]\subseteq (S^{m_{1}}AS^{m_{1}}]\cap
(S^{m_{2}}BS^{m_{2}}]$. Also $((A\cap B)S^{m}(A\cap B)]\subseteq
(AS^{m}A]\cap (BS^{m}B]\subseteq (AS^{m_{1}}A]\cap (BS^{m_{2}}B]$.
Hence $(S^{m}(A\cap B)S^{m}]\cap ((A\cap B)S^{m}(A\cap B)]\subseteq
(S^{m_{1}}AS^{m_{1}}]\cap (S^{m_{2}}BS^{m_{2}}]\cap
(AS^{m_{1}}A]\cap (BS^{m_{2}}B]\subset A\cap B$. Now $(A\cap
B]\subseteq (A]\cap (B]= A\cap B$ implies $(A\cap B]=A\cap B$. Hence
$A\cap B$ is an $m$-bi-interior ideal of $S$.

$(4)$: It follows from above result.

$(5)$: Suppose that $Q$ is an $m$-quasi-ideal of an ordered
semigroup $S$, that is  $(QS^{m}]\cap (S^{m}Q]\subseteq Q$ and
$(Q]=Q$. Now $(QS^{m}Q]\subseteq (QS^{m}] \cap (S^{m}Q]\subseteq Q$
implies $(QS^{m}Q]\cap (S^{m}QS^{m}]\subseteq (QS^{m}Q]\subseteq Q$.
Hence $Q$ is an $m$-bi-interior ideal of $S$.

$(6)$: Suppose that $B$ is an $m$-bi-ideal of an ordered semigroup
$S$ that is  $BS^{m}B\subseteq B$ and $(B]=B$. Now $(BS^{m}B]\cap
(S^{m}BS^{m}]\subseteq (BS^{m}B]\subseteq (B]=B$. Hence $B$ is an
$m$-bi-interior ideal of $S$.

$7$:

Let $I$ is an $m$-interior ideal of $S$. Now $(IS^{m}I]\cap
(S^{m}IS^{m}]\subseteq (S^{m}IS^{m}]\subseteq (I]=I$. Hence $I$ is
an $m$-bi-interior ideal of $S$.

$(8)$: Let $B$ be an $m$-bi-interior ideal of an ordered semigroup
$S$. Now $((BS]S^{m}(BS]]\cap (S^{m}(BS]S^{m}]\subseteq
(BSS^{m}SS]\cap (S^{m}BS^{m+1}]\subseteq (BS^{m+3}]\subseteq (BS]$.
Hence $(BS]$ is an $m$-bi-interior ideal of $S$.

Similarly it can be shown that $(SB]$ is an $m$-bi-interior ideal of
$S$.

$(9)$: Consider $B$ is an $m$-bi-interior ideal of an ordered
semigroup $S$ and $T$ is an $m$-right ideal of $S$. Now $((B\cap
T)S^{m}(B\cap T)]\cap (S^{m}(B\cap T)S^{m}]\subseteq (BS^{m}B]\cap
(TS^{m}T]\cap (S^{m}BS^{m}]\cap (S^{m}TS^{m}]\subseteq (BS^{m}B]\cap
(S^{m}BS^{m}]\cap (TT]\subseteq B\cap (T]\subseteq B\cap T$. Again
$(B\cap T]\subseteq (B]\cap (T]=B\cap T$. Hence $B\cap T$ is an
$m$-bi-interior ideal of $S$.
\end{proof}

But in general, every $m$-bi-interior ideal is not $m$-bi-ideal. In
the next theorem we state the condition for which $m$-bi-ideals and
$m$-bi-interior ideals coincide.
\begin{theorem}Let $S$ be $m$-simple ordered semigroup. Every $m$-bi-interior ideal is
an $m$-bi-ideal of $S$.

\end{theorem}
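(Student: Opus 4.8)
The plan is to reduce the $m$-bi-interior condition $(BS^mB]\cap(S^mBS^m]\subseteq B$ to the $m$-bi-ideal condition $BS^mB\subseteq B$ by showing that in an $m$-simple ordered semigroup the second factor of the intersection swallows everything, i.e. $(S^mBS^m]=S$. Once this is established the intersection collapses: since $(BS^mB]\subseteq S$ always, we get $(BS^mB]\cap(S^mBS^m]=(BS^mB]\cap S=(BS^mB]$, so $(BS^mB]\subseteq B$ and hence $BS^mB\subseteq(BS^mB]\subseteq B$ by Lemma \ref{AB}(1). Together with the hypothesis $(B]=B$ this is almost the whole statement, the only remaining point being that $B$ is a subsemigroup, which I treat at the end.

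The crux is therefore to prove $(S^mBS^m]=S$. I would first check that the (non-empty) set $(S^mBS^m]$ is an $m$-left ideal of $S$. It is down-closed by Lemma \ref{AB}(2). For the subsemigroup property, Lemma \ref{AB}(6) gives $(S^mBS^m](S^mBS^m]\subseteq(S^mBS^{2m}BS^m]$, and since $S^{2m}BS^m\subseteq S^{3m+1}\subseteq S^m$ (using $S^{k}\subseteq S^m$ for $k\ge m$) we obtain $S^mB\cdot S^{2m}BS^m\subseteq S^mBS^m$, so the product lies in $(S^mBS^m]$. For absorption, $S^m(S^mBS^m]\subseteq(S^{2m}BS^m]\subseteq(S^mBS^m]$ by Lemma \ref{AB}(6) and $S^{2m}\subseteq S^m$. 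Because $S$ is $m$-simple it is in particular $m$-left-simple, so the $m$-left ideal $(S^mBS^m]$ must equal $S$. By an entirely analogous computation $(S^mB]$ is an $m$-left ideal, hence $(S^mB]=S$ as well; the delicate point in each verification is to group the factors so that the trailing $B$ is retained while the surplus powers of $S$ telescope away via $S^{2m+1}\subseteq S^m$.

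It then remains to conclude. For the bi-ideal inequality, substituting $(S^mBS^m]=S$ into the $m$-bi-interior hypothesis gives $(BS^mB]=(BS^mB]\cap S=(BS^mB]\cap(S^mBS^m]\subseteq B$, whence $BS^mB\subseteq B$. For the subsemigroup property, take $b,c\in B$; since $(S^mB]=S$ we have $c\le sb'$ for some $s\in S^m$ and $b'\in B$, and compatibility of the order yields $bc\le bsb'\in BS^mB$, so $bc\in(BS^mB]\subseteq B$. Thus $BB\subseteq B$, $B$ is a subsemigroup, and with $(B]=B$ it is an $m$-bi-ideal. The main obstacle is precisely the identity $(S^mBS^m]=S$: everything hinges on recognizing this down-set as an $m$-ideal so that $m$-simplicity forces it to be all of $S$, and the only real bookkeeping is the careful grouping of products in the subsemigroup checks so that the exponents of $S$ collapse correctly.
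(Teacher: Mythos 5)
Your proof takes essentially the same route as the paper: recognize $(S^{m}BS^{m}]$ as an $m$-(left) ideal, use $m$-simplicity to conclude $(S^{m}BS^{m}]=S$, and then collapse the intersection in the $m$-bi-interior condition to get $(BS^{m}B]\subseteq B$, hence $BS^{m}B\subseteq B$. Your additional verification that $B$ is a subsemigroup (via $(S^{m}B]=S$ and $bc\le bsb'\in BS^{m}B\subseteq (BS^{m}B]\subseteq B$) is correct and in fact patches a point the paper's proof passes over silently, since an $m$-bi-interior ideal is only assumed to be a non-empty subset while an $m$-bi-ideal must be a subsemigroup.
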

\begin{proof}
 Let $S$ be an $m$-simple ordered semigroup and $B$ be an $m$-bi-interior
ideal of $S$ then $(BS^{m}B]\cap (S^{m}BS^{m}]\subseteq B$ and
$(B]=B$. Now it can be easily check that $(S^{m}BS^{m}]$ is an
$m$-ideal of $S$. Now $m$-simplicity of $S$ gives $(S^{m}BS^{m}]=S$.
Hence $(BS^{m}B]\cap (S^{m}BS^{m}]\subseteq (BS^{m}B]\cap S\subseteq
B$ which implies $(BS^{m}B]\subseteq B$ that is $BS^{m}B\subseteq
(BS^{m}B]\subseteq B$ and $(B]=B$. Hence $B$ is an $m$-bi-ideal of
$S$.

\end{proof}
\begin{theorem}
Let $S$ be an ordered semigroup then following statements hold :

\begin{enumerate}
\item \vspace{-.4cm} Let $A$ and $C$ be subsemigroups of $S$ and
$B=(AC]$. If $A$ is an $m$-left ideal then $B$ is an $m$-bi-interior
ideal of $S$.
\item \vspace{-.4cm} Let $A$ and $C$ be subsemigroups of $S$ and
$B=(CA]$. If $A$ is an $m$-right ideal then $B$ is an
$m$-bi-interior ideal of $S$.
\end{enumerate}
\end{theorem}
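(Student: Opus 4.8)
The plan is to prove both parts by establishing the stronger containment $(BS^m B]\subseteq B$; once this is in hand, the required inclusion $(BS^m B]\cap (S^m B S^m]\subseteq B$ is immediate, since an intersection is contained in its first factor, and the interior part never needs to be analyzed (mirroring the strategy used for item (8) of Proposition \ref{L}). I first dispose of the closure condition: since $B=(AC]$, Lemma \ref{AB}(2) gives $(B]=((AC]]=(AC]=B$, so only the ideal-type inclusion requires work.

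For part (1), I would push the order-closure outward first. Applying Lemma \ref{AB}(6)--(7) repeatedly, $(BS^m B]=((AC]S^m(AC]]\subseteq (AC\,S^m\,AC]$, so by Lemma \ref{AB}(3) it suffices to prove the set inclusion $ACS^m AC\subseteq AC$. This is the heart of the matter. The naive reduction $S^m A\subseteq A$ only collapses $ACS^m AC$ to $ACAC$, which need not sit inside $AC$ when $m>1$; the missing ingredient is that powers of $S$ are nested, $S^{m+1}\subseteq S^m$ (combine the last two factors of any length-$(m+1)$ product into one). Since $C\subseteq S$, this yields $CS^m\subseteq SS^m=S^{m+1}\subseteq S^m$, and hence $CS^m A\subseteq S^m A\subseteq A$ by the $m$-left-ideal hypothesis on $A$. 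Regrouping, $ACS^m AC=A(CS^m A)C\subseteq A\cdot A\cdot C\subseteq AC$, the final step using that $A$ is a subsemigroup. Thus $(BS^m B]\subseteq (AC]=B$, which finishes part (1).

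Part (2) is the mirror image: with $B=(CA]$ and $A$ an $m$-right ideal ($AS^m\subseteq A$), the same outward-closure step reduces matters to $CAS^m CA\subseteq CA$, and the nesting $S^m C\subseteq S^m$ (again from $C\subseteq S$ and $S^{m+1}\subseteq S^m$) gives $AS^m C\subseteq AS^m\subseteq A$, whence $CAS^m CA=C(AS^m C)A\subseteq C\cdot A\cdot A\subseteq CA$. The main obstacle throughout is recognizing that one must absorb the interior factor $S^m$ into the neighbouring copy of $C$ via $S^{m+1}\subseteq S^m$ \emph{before} invoking the one-sided ideal property; without this observation the bi-part does not reduce into $B$, and one is tempted (wrongly) to search for a delicate argument using the intersection with $(S^m B S^m]$.
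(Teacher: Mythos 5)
Your proof is correct and follows essentially the same route as the paper's: reduce everything to the single containment $(BS^{m}B]\subseteq B$ via Lemma \ref{AB} and discard the $(S^{m}BS^{m}]$ factor of the intersection. The only differences are that you explicitly justify the key step $ACS^{m}AC\subseteq AC$ (through $CS^{m}A\subseteq S^{m+1}A\subseteq S^{m}A\subseteq A$ and $AA\subseteq A$), which the paper asserts without detail, you explicitly verify $(B]=B$, and you use the correct hypothesis in part (2), where the paper's proof misstates it as ``$C$ is an $m$-right ideal.''
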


\begin{proof}

$(1)$: Let $A$ and $C$ be subsemigroups of ordered semigroup $S$ and
$B=(AC]$. Suppose $A$ is an $m$-left ideal of $S$ then
$(BS^{m}B]=((AC]S^{m}(AC]]\subseteq (ACS^{m}AC]\subseteq
 (AC]=B$ implies $(BS^{m}B]\cap (S^{m}BS^{m}]\subseteq (BS^{m}B]\subseteq
B$. Hence $B$ is an $m$-bi-interior ideal of $S$.

$(2)$: Let $A$ and $C$ be subsemigroups of ordered semigroup $S$ and
$B=(CA]$. Suppose $C$ is an $m$-right ideal of $S$ then
$(BS^{m}B]=((CA]S^{m}(CA]]\subseteq (CAS^{m}CA]\subseteq (CA]=B$
implies $(BS^{m}B]\cap (S^{m}BS^{m}]\subseteq (BS^{m}B]\subseteq B$.
Hence $B$ is an $m$-bi-interior ideal of $S$.
\end{proof}

\begin{theorem}
Let $S$ be an ordered semigroup. $S$ is m-bi-interior simple ordered
semigroup if and only if $(S^{m}aS^{m}]\cap(aS^{m}a]=S$, for all
$a\in S$.
\end{theorem}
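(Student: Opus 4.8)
The statement is an equivalence, and the natural object on both sides is the set $B_a:=(aS^{m}a]\cap(S^{m}aS^{m}]$ attached to an element $a\in S$. The plan is to show that each $B_a$ is an $m$-bi-interior ideal and to read the two implications off this fact. The converse is the quick half: assuming $(S^{m}aS^{m}]\cap(aS^{m}a]=S$ for every $a$, let $B$ be any $m$-bi-interior ideal and pick $a\in B$ (possible since $B\neq\emptyset$). Then $aS^{m}a\subseteq BS^{m}B$ and $S^{m}aS^{m}\subseteq S^{m}BS^{m}$, so by Lemma~\ref{AB}(3) and the defining inclusion of $B$,
\[
S=(S^{m}aS^{m}]\cap(aS^{m}a]\subseteq(S^{m}BS^{m}]\cap(BS^{m}B]\subseteq B,
\]
whence $B=S$ and $S$ is $m$-bi-interior-simple.

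For the forward direction I would assume $S$ is $m$-bi-interior-simple, fix $a\in S$, and verify that $B:=B_a=(aS^{m}a]\cap(S^{m}aS^{m}]$ is an $m$-bi-interior ideal; simplicity then forces $B=S$, which is precisely the claimed identity for that $a$. The closure $(B]=B$ follows from Lemma~\ref{AB}: parts (4) and (2) give $(B]\subseteq((aS^{m}a]]\cap((S^{m}aS^{m}]]=(aS^{m}a]\cap(S^{m}aS^{m}]=B$, and part (1) gives the reverse inclusion. For the ideal inclusion I would use $B\subseteq(aS^{m}a]$ and $B\subseteq(S^{m}aS^{m}]$ with Lemma~\ref{AB}(6) to bound $(BS^{m}B]\subseteq(aS^{m}aS^{m}aS^{m}a]$ and $(S^{m}BS^{m}]\subseteq(S^{2m}aS^{2m}]$. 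The book-keeping then rests on the elementary inclusion $S^{\,n}\subseteq S^{m}$ for $n\ge m$ (merge adjacent factors), which collapses $aS^{m}aS^{m}aS^{m}a\subseteq aS^{m}a$ and $S^{2m}aS^{2m}\subseteq S^{m}aS^{m}$; intersecting gives $(BS^{m}B]\cap(S^{m}BS^{m}]\subseteq(aS^{m}a]\cap(S^{m}aS^{m}]=B$.

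The step I expect to be the real obstacle is the non-emptiness of $B$, which is part of the definition of an $m$-bi-interior ideal and without which $m$-bi-interior-simplicity yields nothing. I would dispatch it by exhibiting an explicit element: $a^{2m+1}=a\,a^{2m-1}a$ with $a^{2m-1}\in S^{m}$ shows $a^{2m+1}\in aS^{m}a$, while $a^{2m+1}=a^{m}\,a\,a^{m}$ with $a^{m}\in S^{m}$ shows $a^{2m+1}\in S^{m}aS^{m}$; hence $a^{2m+1}\in aS^{m}a\cap S^{m}aS^{m}\subseteq B$ and $B\neq\emptyset$. With non-emptiness in hand, $B$ is a genuine $m$-bi-interior ideal, so $m$-bi-interior-simplicity gives $B=S$ for the arbitrary $a$ chosen, completing the forward implication.
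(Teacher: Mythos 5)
Your proof is correct and follows essentially the same route as the paper: both directions hinge on showing that $B_a=(aS^{m}a]\cap(S^{m}aS^{m}]$ is an $m$-bi-interior ideal (via the collapse $S^{n}\subseteq S^{m}$ for $n\ge m$) and then invoking simplicity, with the converse argued identically by picking $a\in B$. If anything, yours is slightly more careful than the paper's version, which leaves the non-emptiness of $B_a$ and the closure $(B_a]=B_a$ unverified and misstates the conclusion of the forward direction as a ``contradiction'' rather than simply $B_a=S$.
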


\begin{proof}
Consider $S$ is an m-bi-interior simple ordered semigroup and $a\in
S$. Now $(S^{m}((S^{m}aS^{m}]\cap (aS^{m}a])S^{m}]\cap
(((S^{m}aS^{m}]\cap (aS^{m}a])S^{m}((S^{m}aS^{m}]\cap
(aS^{m}a])]\subseteq (S^{m}(S^{m}aS^{m}]S^{m}]\cap
((aS^{m}a]S^{m}(aS^{m}a]]\subseteq (S^{2m}aS^{2m}]\cap
(aS^{m}SS^{m}SS^{m}a]\subseteq (S^{m}aS^{m}]\cap (aS^{m}a]$. Hence
$(S^{m}aS^{m}]\cap (aS^{m}a]$ is an $m$-bi-interior ideal of $S$
which contradict the fact that $S$ is $m$-bi-interior simple. Hence
$(S^{m}aS^{m}]\cap(aS^{m}a]=S$.

Conversely, suppose that $(S^{m}aS^{m}]\cap(aS^{m}a]=S$, for all
$a\in S$. Let $B$ be an $m$-bi-interior ideal of $S$ and $a\in B$.
Then $S=(S^{m}aS^{m}]\cap(aS^{m}a]\subseteq
(S^{m}BS^{m}]\cap(BS^{m}B]\subseteq B$ which implies $S=B$. Thus $S$
is m-bi-interior simple ordered semigroup.

\end{proof}

\begin{theorem}
An ordered semigroup $S$ is $m$-regular if and only if $B\cap I\cap
L\subseteq (BIL]$ for any $m$-bi-interior ideal $B$,  $m$-ideal $I$
and $m$-left ideal $L$ of $S$.
\end{theorem}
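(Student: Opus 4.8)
The plan is to treat the two implications separately, using Proposition \ref{L} together with the working definition of $m$-regularity, namely $a\in(aS^{m}a]$ for every $a\in S$.

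For the forward direction I would assume $S$ is $m$-regular and take $a\in B\cap I\cap L$. The idea is to apply $m$-regularity twice and let the $m$-ideal $I$ swallow the inner factors. Since $a\in(aS^{m}a]$ there is $x\in S^{m}$ with $a\leq axa$; applying the same fact to the right-hand $a$ gives $a\leq ax(aya)=a(xay)a$ for some $y\in S^{m}$. Because $I$ is an $m$-ideal we have $xay\in S^{m}IS^{m}\subseteq I$, while $a\in B$ and $a\in L$; hence $a(xay)a\in BIL$, and $a\leq a(xay)a$ gives $a\in(BIL]$. This is the routine direction, and it is exactly here that the $m$-ideal hypothesis on $I$ (not merely one-sidedness) is used.

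For the converse, the plan is to feed concrete ideals into the hypothesis for an arbitrary fixed $a\in S$. I would take $B=R_{m}(a):=(a\cup a^{2}\cup\cdots\cup a^{m}\cup aS^{m}]$, the principal $m$-right ideal generated by $a$, which is an $m$-bi-interior ideal by Proposition \ref{L}(1); take $I=S$, an $m$-ideal; and take $L=L_{m}(a):=(a\cup a^{2}\cup\cdots\cup a^{m}\cup S^{m}a]$, the principal $m$-left ideal generated by $a$. Since $a$ lies in all three, the hypothesis yields $a\in(BIL]=(R_{m}(a)\,S\,L_{m}(a)]$. The key computation is that $R_{m}(a)\,S\,L_{m}(a)\subseteq(aSa]$: a typical element is $\leq\beta c\lambda$ with $\beta\in\{a,\dots,a^{m}\}\cup aS^{m}$, $c\in S$ and $\lambda\in\{a,\dots,a^{m}\}\cup S^{m}a$, and since $\beta$ begins with $a$ and $\lambda$ ends with $a$ one always gets $\beta c\lambda=aza$ with $z\in S^{t}$, $t\geq 1$, so $z\in S^{t}\subseteq S$. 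Throughout I would use the bracketing fact $S^{k}\subseteq S^{m}$ for $k\geq m$. Thus $a\in(aSa]$, i.e.\ $a\leq aza$ for some $z\in S$. Finally I bootstrap the exponent: substituting $a\leq aza$ into its own right-hand $a$ gives $a\leq a(zaz)a$ with $zaz\in S^{3}$, and iterating produces $a\leq aw_{k}a$ with $w_{k}\in S^{2^{k}-1}$; choosing $k$ so that $2^{k}-1\geq m$ and using $S^{2^{k}-1}\subseteq S^{m}$ gives $a\in(aS^{m}a]$, so $a$ is $m$-regular, and hence so is $S$.

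The hard part will be the converse, and specifically the control of the product $R_{m}(a)\,S\,L_{m}(a)$: the terms arising from the pure powers $a^{j}$ (rather than from $aS^{m}$ or $S^{m}a$) leave only a single $S$ in the middle, so the hypothesis does not hand us $m$-regularity directly. The bootstrapping step—legitimate precisely because the order is compatible with the multiplication—is what reinflates that lone $S$ back to $S^{m}$ and recovers genuine $m$-regularity; verifying the principal one-sided ideals have the stated form and are closed as ideals (again via $S^{k}\subseteq S^{m}$) is the remaining bookkeeping.
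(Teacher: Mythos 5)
Your proof is correct, but both halves run along genuinely different lines from the paper's. For the forward direction the paper argues with sets rather than elements: it expands $a\in(aS^{m}a]\subseteq((aS^{m}aS^{m}a](S^{m}aS^{m}a](S^{m}a]]$ and pushes the first factor into $B$ via the bi-interior condition $(BS^{m}B]\cap(S^{m}BS^{m}]\subseteq B$, the second into $I$, the third into $L$; your two-fold substitution $a\leq a(xay)a$ reaches the same conclusion more economically and, notably, without ever invoking the bi-interior property of $B$ (only the membership $a\in B$ is used). For the converse the paper specializes the hypothesis to $B=R$ an $m$-right ideal and $L$ an $m$-left ideal, deduces $R\cap L\subseteq(RL]$, and quotes Lemma \ref{REGULAR}; you instead feed in the principal ideals $R_{m}(a)$, $S$, $L_{m}(a)$, land in $(aSa]$, and bootstrap the exponent from $1$ up to $m$ using compatibility of the order. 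Your route is longer but self-contained: it does not rely on Lemma \ref{REGULAR}, and it sidesteps the delicate point in the paper's converse of which $m$-ideal to insert in the middle slot (the paper writes $R\cap L=R\cap S^{m}\cap L\subseteq(RS^{m}L]$, which tacitly treats $S^{m}$ as an $m$-ideal containing $R\cap L$; your choice $I=S$ combined with the bootstrapping avoids that issue entirely). The bookkeeping you flag does go through: $R_{m}(a)$ and $L_{m}(a)$ are one-sided $m$-ideals by $S^{k}\subseteq S^{m}$ for $k\geq m$, and Proposition \ref{L}(1) then makes $R_{m}(a)$ an $m$-bi-interior ideal, so the hypothesis applies.
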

\begin{proof}
Suppose $S$ is an $m$-regular ordered semigroup and $B$, $I$, $L$
are $m$-bi-interior ideal, $m$-ideal and $m$-left ideal of $S$
respectively. Let $a\in B\cap I\cap L$. . Since $S$ is $m$-regular
$a\in (aS^{m}a]\subseteq (aS^{m}aS^{m}aS^{m}aS^{m}aS^{m}a]\subseteq
((aS^{m}aS^{m}a](S^{m}aS^{m}a](S^{m}a]]$.

Now $(aS^{m}aS^{m}a]\subseteq (BS^{m}BS^{m}B]\subseteq
(BS^{m}SS^{m}B]\subseteq (BS^{2m+1}B]\subseteq (BS^{m}B]$. Again
$(aS^{m}aS^{m}a]\subseteq (S^{m}BS^{m}]$. Thus
$(aS^{m}aS^{m}a]\subseteq (BS^{m}B]\cap (S^{m}BS^{m}]\subseteq B$.
Again $(S^{m}aS^{m}a]\subseteq (S^{m}IS^{m}I]\subseteq (I]=I$ and
$(S^{m}a]\subseteq (S^{m}L]\subseteq L$. Hence $a\in
((aS^{m}aS^{m}a](S^{m}aS^{m}a](S^{m}a]]\subseteq (BIL]$ that is
$B\cap I\cap L\subseteq (BIL]$.

Conversely, consider $B\cap I\cap L\subseteq (BIL]$, for any
$m$-bi-interior ideal $B$, $m$-ideal $I$ and $m$-left ideal $L$ of
$S$. Let $R$, $L$ be an $m$-right and $m$-left ideal of $S$. Now, by
assumption  $R\cap L= R\cap S^{m}\cap L\subseteq (RS^{m}L]\subseteq
(RL]$ and $(RL]\subseteq (R\cdot1\cdot1\cdot\cdot S]\subseteq
(RS^{m}]\subseteq (R]=R$. Dually $(RL]\subseteq L$. Hence $R\cap
L=(RL]$. Thus Lemma\ref{REGULAR} yields that $S$ is an $m$-regular.

\end{proof}

\begin{theorem}\label{INT-REG}
Let $S$ be an ordered semigroup. If $S$ is $m$-regular then every
$m$-ideals and $m$-interior ideals co-incide.
\end{theorem}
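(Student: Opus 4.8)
The statement asserts that, in an $m$-regular ordered semigroup, the class of $m$-ideals and the class of $m$-interior ideals are identical, so the plan is to prove two inclusions. First I would dispose of the easy direction, which needs no regularity at all: if $I$ is an $m$-ideal then $S^{m}I\subseteq I$ and $IS^{m}\subseteq I$, hence $S^{m}IS^{m}\subseteq IS^{m}\subseteq I$, and since $(I]=I$ and $I$ is a subsemigroup by hypothesis, $I$ is an $m$-interior ideal. Thus the whole content of the theorem lies in the converse.

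The substantive direction is that every $m$-interior ideal $I$ is an $m$-ideal, and here $m$-regularity is essential. I plan to establish $S^{m}I\subseteq I$, the argument for $IS^{m}\subseteq I$ being the mirror image. Fix $a\in I$. By $m$-regularity there is $u\in S^{m}$ with $a\leq aua$. Using compatibility of $\leq$ with the multiplication I would feed this inequality into itself: from $a\leq aua$ one gets $aua\leq au(aua)=auaua$, and transitivity then yields $a\leq auaua$. Now for any $s\in S^{m}$, right- and left-compatibility give $sa\leq s(auaua)=(sau)\,a\,(ua)$.

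The key bookkeeping step is to check that the two outer blocks of this regrouped product fall inside $S^{m}$. Since a product of $k\geq m$ factors is also a product of $m$ factors (grouping the tail), we have $S^{k}\subseteq S^{m}$ for all $k\geq m$; hence $sau\in S^{2m+1}\subseteq S^{m}$ and $ua\in S^{m+1}\subseteq S^{m}$, so that $(sau)\,a\,(ua)\in S^{m}IS^{m}\subseteq I$ by the interior-ideal condition. Therefore $sa$ lies below an element of $I$, and because $(I]=I$ we conclude $sa\in I$, i.e. $S^{m}I\subseteq I$. The symmetric computation $as\leq(au)\,a\,(uas)$ with $au\in S^{m+1}\subseteq S^{m}$ and $uas\in S^{2m+1}\subseteq S^{m}$ gives $IS^{m}\subseteq I$. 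Together with $(I]=I$ and $I$ being a subsemigroup, this shows $I$ is an $m$-ideal.

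The step I expect to be the main obstacle is the iteration of the regularity inequality to reach $a\leq auaua$: I must justify it purely from $a\leq aua$ using one-sided compatibility of the order, and then verify that after regrouping the six-factor product the left- and right-hand blocks genuinely sit in $S^{m}$, so that the whole product is captured by $S^{m}IS^{m}\subseteq I$. Everything else is routine manipulation of $(\,\cdot\,]$ via Lemma \ref{AB}.
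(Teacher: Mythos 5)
Your proof is correct and follows essentially the same route as the paper: both use $m$-regularity to expand an element of $I$ (you write $a\leq auaua$ elementwise; the paper writes $I\subseteq (IS^{m}I]$ and substitutes it into itself) and then regroup the resulting product so that the outer blocks land in $S^{m}$, whence $S^{m}IS^{m}\subseteq I$ together with $(I]=I$ finishes the argument. Your elementwise presentation, including the explicit check that $S^{k}\subseteq S^{m}$ for $k\geq m$, is if anything cleaner than the paper's set-inclusion chain.
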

\begin{proof}
Its very obvious that every $m$-ideal is an $m$-interior ideal.

Now for the converse part, consider an ordered semigroup $S$ such
that $S$ is $m$-regular. Let $I$ be an interior ideal of $S$. Now
$m$-regularity of $S$ infers that $I\subseteq (IS^{m}I]$. Now
$IS^{m}\subseteq (IS^{m}I]S^{m}\subseteq (IS^{m}I](S^{m}]\subseteq
((IS^{m}I]S^{m}I](S^{m}]\subseteq (S^{m+1}IS^{2m+1}\subseteq
(S^{m}IS^{m}]\subseteq (I]=I$. Similarly $S^{m}I\subseteq I$. Hence
$I$ is an $m$-ideal of $S$.
\end{proof}
\begin{theorem}\label{BSB}
Let $S$ be an ordered semigroup and $B$ is an $m$-bi-interior ideal
of $S$. Then $S$ is $m$-regular if and only if $(BS^{m}B]\cap
(S^{m}BS^{m}]=B$.
\end{theorem}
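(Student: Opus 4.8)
The statement is an equivalence, and since $B$ is an $m$-bi-interior ideal the inclusion $(BS^mB]\cap(S^mBS^m]\subseteq B$ together with $(B]=B$ is already given; so in each direction only the reverse inclusion $B\subseteq(BS^mB]\cap(S^mBS^m]$, respectively the extraction of $m$-regularity, carries content. The plan is to treat the two implications separately, using $m$-regularity element-wise in the first and Lemma \ref{REGULAR} in the second.

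For the forward implication, assume $S$ is $m$-regular and fix $a\in B$. First I would apply $m$-regularity to obtain $a\le asa$ with $s\in S^m$, and then iterate once (substituting the regular expression for the final $a$) to get $a\le asasa$, so that $a\in(aS^maS^ma]$. The heart of the argument is to show that this single set $(aS^maS^ma]$ lies in \emph{both} $(BS^mB]$ and $(S^mBS^m]$. Since $a\in B$, the inclusion $aS^maS^ma\subseteq BS^mBS^mB$ is immediate, and collapsing the central block via $S^mBS^m\subseteq S^mSS^m=S^{2m+1}\subseteq S^m$ gives $aS^maS^ma\subseteq BS^mB$. For the other membership I would group the outer factors using the elementary but crucial fact that $S^{k+1}\subseteq S^k$ (obtained by multiplying the first two factors together): then $aS^m\subseteq S^{m+1}\subseteq S^m$ and $S^ma\subseteq S^{m+1}\subseteq S^m$, whence $aS^maS^ma=(aS^m)\,a\,(S^ma)\subseteq S^mBS^m$. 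Combining, $a\in(aS^maS^ma]\subseteq(BS^mB]\cap(S^mBS^m]$, so $B\subseteq(BS^mB]\cap(S^mBS^m]$ and equality holds.

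For the converse I would invoke Lemma \ref{REGULAR}, so it suffices to prove $(RL]=R\cap L$ for an arbitrary $m$-right ideal $R$ and $m$-left ideal $L$. By Proposition \ref{L}(4) the set $B:=R\cap L$ is an $m$-bi-interior ideal, so the hypothesis applies to it; using $B\subseteq R$, $B\subseteq L$ and $RS^m\subseteq R$ I get $R\cap L=B=(BS^mB]\cap(S^mBS^m]\subseteq(BS^mB]\subseteq(RS^mL]\subseteq(RL]$. The reverse inclusion $(RL]\subseteq R\cap L$ is routine from the $m$-right-ideal property $RS^m\subseteq R$ and the $m$-left-ideal property $S^mL\subseteq L$, as in the earlier $(RL]=R\cap L$ computation. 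Thus $(RL]=R\cap L$, and Lemma \ref{REGULAR} yields that $S$ is $m$-regular.

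The main obstacle is the reverse inclusion in the forward implication, i.e.\ producing one membership that simultaneously witnesses $a\in(BS^mB]$ and $a\in(S^mBS^m]$. A naive single application of regularity ($a\le asa$) places $a$ in $(BS^mB]$ but only in $(S^mB]$ rather than $(S^mBS^m]$; the one extra iteration to $asasa$ is exactly what supplies the third copy of $a$ needed to leave a genuine factor of $B$ in the centre while absorbing the outer factors into $S^m$. I would be careful to record the absorption $S^{k+1}\subseteq S^k$ explicitly, since the collapse of surplus powers of $S$ in both directions rests entirely on it.
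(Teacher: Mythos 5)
Your proof is correct and follows essentially the same route as the paper's: in the forward direction the paper also iterates regularity once to get $x\in(xS^{m}xS^{m}x]$ and places that set in both $(BS^{m}B]$ and $(S^{m}BS^{m}]$ (you merely spell out the absorption $S^{k+1}\subseteq S^{k}$ that the paper leaves implicit), and in the converse the paper likewise takes $B=R\cap L$, derives $(RL]=R\cap L$, and applies Lemma~\ref{REGULAR}. No substantive differences.
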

\begin{proof}
First suppose that $S$ is an $m$-regular ordered semigroup and $B$
is an $m$-bi-interior ideal of $S$. Then $(BS^{m}B]\cap
(S^{m}BS^{m}]\subseteq B$ and $(B]=B$. Let $x\in B$ then $x\in
(xS^{m}x]\subseteq (xS^{m}xS^{m}x]$. Hence $(xS^{m}xS^{m}x]\in
(BS^{m}B]\cap (S^{m}BS^{m}]$ which implies $B\subseteq (BS^{m}B]\cap
(S^{m}BS^{m}]$. Hence $(BS^{m}B]\cap (S^{m}BS^{m}]=B$.

Conversely, assume that$ (BS^{m}B]\cap (S^{m}BS^{m}]=B$, for all
$m$-bi-interior ideal $B$ of $S$. Let $B=R\cap L$, where $R$ is an
$m$-right and $L$ is an $m$-left ideal of $S$. Then by Proposition
\ref{L}, $B$ is an $m$-bi-interior ideal of $S$. Now $(BS^{m}B]\cap
(S^{m}BS^{m}]= ((R\cap L)S^{m}(R\cap L)]\cap (S^{m}(R\cap
L)S^{m}]\subseteq (RS^{m}L]\subseteq (RL]$ and $(RL]\subseteq R\cap
L$. Hence following Lemma \ref{REGULAR}, $S$ is an $m$-regular
ordered semigroup.
\end{proof}

\begin{theorem}\label{(Rl]}
Let $B$ be a subsemigroup of an $m$-regular ordered semigroup $S$.
Then for some $m$-right ideal $R$ and $m$-left ideal $L$ of $S$, $B$
can be represented as $B=(RL]$ if and only if $B$ is an
$m$-bi-interior ideal of $S$.
\end{theorem}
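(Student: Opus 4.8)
The plan is to route the entire equivalence through the bi-ideal characterisation already available in Theorem \ref{RL}, which says that over an $m$-regular ordered semigroup a non-empty subset is an $m$-bi-ideal exactly when it has the form $(RL]$ for an $m$-right ideal $R$ and an $m$-left ideal $L$. Thus it suffices to show that, over an $m$-regular $S$, the notions of $m$-bi-ideal and $m$-bi-interior ideal coincide; the claimed equivalence then follows by transitivity. The forward implication turns out to be immediate, so the real work is the converse.

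For the direction ``$B=(RL]\ \Rightarrow\ B$ is $m$-bi-interior'', I would simply note that Theorem \ref{RL} makes $B$ an $m$-bi-ideal, and then invoke Proposition \ref{L}(6), by which every $m$-bi-ideal is an $m$-bi-interior ideal. No further computation is needed here.

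For the converse, suppose $B$ is an $m$-bi-interior ideal; since $B$ is a subsemigroup with $(B]=B$, it remains only to verify $BS^{m}B\subseteq B$, after which Theorem \ref{RL} supplies the desired $R$ and $L$. The key step is the inclusion $BS^{m}B\subseteq (S^{m}BS^{m}]$. To obtain it I take a typical element $xyz$ with $x,z\in B$ and $y\in S^{m}$, apply $m$-regularity to each of the two $B$-factors to get $u,w\in S^{m}$ with $x\leq xux$ and $z\leq zwz$, and use compatibility of the order to deduce $xyz\leq (xu)\,x\,(yzwz)$. Regrouping and using $S^{m+1}\subseteq S^{m}$ shows $xu\in S^{m}$ and $yzwz\in S^{m}$, so the right-hand element lies in $S^{m}BS^{m}$, whence $xyz\in (S^{m}BS^{m}]$. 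Combining $BS^{m}B\subseteq (BS^{m}B]$ (Lemma \ref{AB}(1)) with this inclusion gives $BS^{m}B\subseteq (BS^{m}B]\cap (S^{m}BS^{m}]\subseteq B$ by the defining property of an $m$-bi-interior ideal, so $B$ is an $m$-bi-ideal and Theorem \ref{RL} then yields $B=(RL]$.

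I expect the main obstacle to be exactly the inclusion $BS^{m}B\subseteq (S^{m}BS^{m}]$: one must exploit $m$-regularity on both outer $B$-factors simultaneously and then re-bracket the resulting length-seven product $xuxyzwz$ so that a single element of $B$ is isolated between two blocks that each collapse into $S^{m}$. Everything else is bookkeeping with Lemma \ref{AB} and the cited results.
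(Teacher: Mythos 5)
Your proof is correct, but it takes a genuinely different route from the paper. The paper argues the forward direction by a direct computation $(BS^{m}B]\subseteq (RLS^{m}RL]\subseteq (RL]=B$, and for the converse it invokes Theorem \ref{BSB} to get $(BS^{m}B]\cap(S^{m}BS^{m}]=B$ and then exhibits the witnesses explicitly as $R=(BS^{m}]$ and $L=(S^{m}B]$, verifying $B=(RL]$ by a chain of inclusions. You instead reduce everything to Theorem \ref{RL} by proving that over an $m$-regular $S$ the notions of $m$-bi-ideal and $m$-bi-interior ideal coincide for subsemigroups: the forward direction becomes a two-line citation (Theorem \ref{RL} plus Proposition \ref{L}(6)), and the converse hinges on your inclusion $BS^{m}B\subseteq (S^{m}BS^{m}]$, whose element-wise verification is sound --- from $x\leq xux$ and $z\leq zwz$ with $u,w\in S^{m}$ you get $xyz\leq (xu)\,x\,(yzwz)$ with $xu\in S^{m+1}\subseteq S^{m}$ and $yzwz\in S^{2m+2}\subseteq S^{m}$, and combining with $BS^{m}B\subseteq(BS^{m}B]$ and the defining inclusion of an $m$-bi-interior ideal gives $BS^{m}B\subseteq B$. (Note that the subsemigroup hypothesis in the statement is exactly what lets you promote this to ``$B$ is an $m$-bi-ideal,'' so nothing is missing there.) What each approach buys: yours isolates a clean structural fact of independent interest (the coincidence of the two ideal classes under $m$-regularity, strengthening the containment in Proposition \ref{L}(6)), while the paper's proof is self-contained modulo Theorem \ref{BSB} and has the advantage of producing the concrete decomposition $R=(BS^{m}]$, $L=(S^{m}B]$ rather than only asserting existence through Theorem \ref{RL}.
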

\begin{proof}
Let $S$ be an $m$-regular ordered semigroup and $L$ be an $m$-left
and $R$ be an $m$-right ideal of $S$. Now $(BS^{m}B]\subseteq
(RLS^{m}RL]\subseteq (RL]=B$. Hence $(BS^{m}B]\cap
(S^{m}BS^{m}]\subseteq (BS^{m}B]\subseteq B$. Evidently
$(B]=(RL]=B$. Thus $B$ is an $m$-bi-interior ideal of $S$.

Conversely suppose that $B$ is an m-bi-interior ideal of $S$. Now by
Theorem \ref{BSB}, $m$-regularity of $S$ yields $(BS^{m}B]\cap
(S^{m}BS^{m}]=B$. Let $R=(BS^{m}]$ and $L=(S^{m}B]$. It can be
easily check that $R$ and $L$ are $m$-right and $m$-left ideal of
$S$ respectively. Consider $x\in B$. Now $m$-regularity of $S$ gives
$x\in (xS^{m}x]\subseteq (BS^{m}B]$ that is $B\subseteq (BS^{m}B]$.
Now $(RL]\subseteq ((BS^{m}](S^{m}B]]\subseteq (BS^{m}B]$. Again
$(RL]\subseteq ((BS^{m}](S^{m}B]]\subseteq
(BS^{2m}B]=((BS^{m}B]S^{2m}(BS^{m}B]]\subseteq (S^{m}BS^{m}]$. Hence
$(RL]\subseteq (BS^{m}B]\cap (S^{m}BS^{m}]\subseteq B$. Again
 $B\subseteq (BS^{m}B]\subseteq (BS^{m}BS^{m}BS^{m}B]\subseteq \\
 ((BS^{m}]BS^{m}B(S^{m}B]]\subseteq (RSS^{m}SL]\subseteq (RS^{m}L]\subseteq
 (RL]$. Hence $B=(RL]$.
\end{proof}

\begin{center}
\bf{Acknowledgements}
\end{center}
The authors would like to thank the funding agency, the University
Grant Commission (UGC) of the Government of India, for providing
financial support for this research in the form of UGC-CSIR NET-JRF.

\bibliographystyle{amsplain}

\end{document}